\newtheorem{theorem}{Theorem}[section]
\newtheorem{lemma}[theorem]{Lemma}
\newtheorem{proposition}[theorem]{Proposition}
\newtheorem{remark}[theorem]{Remark}
\newcommand{\filledbox}{\leavevmode
  \hbox to.77778em{%
  \hfil\vbox to.675em{\hrule width.6em height.6em}\hfil}}
\newcommand{\Rm}{{\mathbb R}}
\newcommand{\eps}{\varepsilon}
\begin{document}
\tabulinesep=1.0mm
\title{Exterior scattering of non-radial solutions to energy subcritical wave equations\footnote{MSC classes: 35L05, 35L71; the author is financially supported by National Natural Science Foundation of China Programs 12071339, 11771325.}}

\author{Ruipeng Shen\\
Centre for Applied Mathematics\\
Tianjin University\\
Tianjin, China}

\maketitle

\begin{abstract}
  We consider the defocusing, energy subcritical wave equation $\partial_t^2 u - \Delta u = -|u|^{p-1} u$ in dimension $d \in \{3,4,5\}$ and prove the exterior scattering of solutions if $3\leq d \leq 5$ and $1+6/d<p<1+4/(d-2)$. More precisely, given any solution with a finite energy, there exists a solution $u_L$ to the homogeneous linear wave equation, so that the following limit holds
 \[
 \lim_{t\rightarrow +\infty} \int_{|x|>t+R} |\nabla_{x,t} u(x,t)- \nabla_{x,t} u_L(x,t)|^2 dx = 0
\]
 for any fixed real number $R$. This generalize the previously known exterior scattering result in the radial case. 
\end{abstract}

\section{Introduction}
We consider the defocusing, energy subcritical wave equation ($d\geq 3$, $p<1+4/(d-2)$)
\[
 \left\{\begin{array}{ll} \partial_t^2 u - \Delta u = -|u|^{p-1} u, & (x,t) \in \Rm^d \times \Rm; \\
 u|_{t=0} =u_0 \in \dot{H}^1 \cap L^{p+1}(\Rm^d); & \\
 u_t |_{t=0} = u_1 \in L^2(\Rm^d). & \end{array}\right.\quad (CP1)
\]
The existence and uniqueness of local solutions follows a combination of suitable Strichartz estimates (see Ginibre-Velo \cite{strichartz}, for instance) and a fixed-point argument. Please see Kapitanski \cite{loc1} and Lindblad-Sogge \cite{ls} for more details about this kind of argument. We focus on the global behaviours in this work. 

\paragraph{Energy critical case} The case with an energy critical nonlinearity $p = p_e = 1 + 4/(d-2)$ has been extensively studied by many mathematicians in the last few decades of the 20th century. It has been proved that any solutions with initial data $(u_0,u_1)\in \dot{H}^1 \times L^2(\Rm^d)$ must exist for all time $t \in \Rm$ and scatter. By scattering we mean that there exist free waves $v^+, v^-$ (i.e. solutions to the homogeneous linear wave equation $\partial_t^2 v - \Delta v = 0$), so that 
\[
 \lim_{t\rightarrow \pm \infty} \left\|(u(\cdot,t), u_t(\cdot,t))-(v^\pm (\cdot,t), v_t^\pm (\cdot,t))\right\|_{\dot{H}^1 \times L^2(\Rm^d)} = 0. 
\] 
Please see, for example, Ginibre-Soffer-Velo \cite{locad1}, Grillakis \cite{mg1,mg2}, Kapitanski \cite{continuousL}, Nakanishi \cite{enscatter1, enscatter2}, Pecher \cite{local1}, Shatah-Struwe \cite{ss2}, Struwe \cite{struwe} for more details. 

\paragraph{Energy subcritical/supercritical case} The case with energy subcritical exponent $p<p_e$ or supercritical exponent $p>p_e$ seems to be more difficult. It is conjectured that any solution with initial data $(u_0,u_1)$ in the critical Sobolev space $\dot{H}^{s_p} \times \dot{H}^{s_p-1}$ must exist globally and scatter. Here $s_p = d/2 - 2/(p-1)$. Although this conjecture is still an open problem, there are many related scattering results. Roughly speaking, we may divide these results into two categories.

\paragraph{Conditional scattering} There are many works proving that if the critical Sobolev norm of a solution is bounded in its whole maximal lifespan, then the solution must scatter, for different dimensions $d$ and ranges of $p$. Please refer to Duyckaerts et al. \cite{dkm2}, Kenig-Merle \cite{km}, Killip-Visan \cite{kv2} (dimension 3), Killip-Visan \cite{kv3} (all dimensions) for energy supercritical case and Dodson-Lawrie \cite{cubic3dwave},  Dodson et al. \cite{nonradial3p5}, Shen \cite{shen2} (dimension 3),  Rodriguez \cite{sub45} (dimension 4 and 5) for energy subcritical case. All of the works mentioned above utilize the compactness-rigidity argument, which was first introduced by Kenig-Merle \cite{kenig, kenig1} in order to study the energy critical, focusing wave and Schr\"{o}dinger equations, and deal with both defocusing and focusing cases in the same way.   

\paragraph{Better initial data}  We may also prove the scattering in the energy subcritical case under suitable assumptions on the initial data, which is typically stronger than a finite critical Sobolev norm. Dodson \cite{claim1} proves the global existence and scattering of solutions in the conformal case of dimension 3 ($d=p=3$) if the initial data $(u_0,u_1) \in \dot{H}^{1/2} \times \dot{H}^{-1/2}$ are radial. There are also many scattering results assuming that the initial data are contained in a weighted Sobolev space. For example, Ginibre and Velo \cite{conformal2} apply conformal conservation laws and prove the scattering if 
\[
  \int_{\Rm^d} \left[(|x|^2+1) (|\nabla u_0 (x)|^2 + |u_1(x)|^2) + |u_0(x)|^2 \right] dx < \infty.
\]
Yang \cite{yang1} considers the energy momentum tensor and its associated currents and proves the scattering under a weaker assumption on initial data
 \begin{equation}
  \int_{\Rm^d} (1+|x|)^\kappa \left(\frac{1}{2}|\nabla u_0(x)|^2 + \frac{1}{2}|u_1(x)|^2 + \frac{1}{p+1}|u_0(x)|^{p+1}\right) < +\infty, \label{def weighted energy}
 \end{equation}
with $p$ and $\kappa$ satisfying 
\begin{align*}
 &\frac{1+\sqrt{d^2+4d-4}}{d-1} < p < p_e(d),& &\kappa> \max\left\{\frac{4}{p-1}-d+2,1\right\}.&
\end{align*}
Recently the author introduces the inward/outward energy theory and further improves the scattering theory when $1+4/(d-1)\leq p< 1+4/(d-2)$. More precisely, we still assume that $(u_0,u_1)$ satisfy \eqref{def weighted energy} but with a smaller lower bound of $\kappa$: (Please see \cite{shenenergy, shenhr} for radial cases and \cite{shen3dnonradial, shenhd} for non-radial cases)
\begin{align*}
   &\kappa >\kappa_1(d,p) = \frac{(d+2)(d+3)-(d+3)(d-2)p}{(d-1)(d+3)-(d+1)(d-3)p};& &\hbox{(Non-radial case)}& \\
   &\kappa>\kappa_2(d,p) = \frac{4-(d-2)(p-1)}{p+1};& &\hbox{(Radial case)}&
\end{align*}

\begin{remark}
It has been known many years ago that in the energy subcritical case a combination of the energy conservation law with a suitable local theory leads to the global existence of solutions as long as the initial data come with a finite energy. Please see, for example, Gibibre-Velo \cite{globalsubcritical} for more details. 
\end{remark} 

\paragraph{Topics of this work} In this short article we consider the exterior scattering of solutions to (CP1) in the energy subcritical case. More precisely, we consider whether there exists a free wave $u_L$, so that the limit (For convenience we use the notation $\nabla_{x,t} u= (\nabla_x u, \partial_t u)$)
\[
 \lim_{t\rightarrow +\infty} \int_{|x|>t+R} |\nabla_{x,t} u(x,t)-\nabla_{x,t} u_L(x,t)|^2  dx = 0
\]
holds for any fixed real number $R$. In the radial case, we may apply method of characteristic lines and verify that the exterior scattering happens whenever the solution comes with a finite energy.  Please see \cite{shenenergy, shenhr} for more details. In the non-radial case, however, the exterior scattering of an arbitrary finite-energy solution has not been proved in the energy subcritical case, as far as the author knows. The inward/outward energy theory mentioned above shows that ($r=|x|$ is the radius)
\[
 \lim_{t \rightarrow +\infty} \int_{\Rm^d} \left(\left|u_r+\frac{d-1}{2}\cdot \frac{u}{|x|} + u_t \right|^2 + \frac{(d-1)(d-3)}{16}\cdot \frac{|u|^2}{|x|^2} + |\slashed{\nabla} u|^2 \right) dx = 0.
\]
Radiation fields (Theorem \ref{radiation}) show that a free wave $u$ must satisfy the identity above as well. Thus the solutions to both the defocusing and free wave equation share some asymptotic behaviours. However, the information given by inward/outward energy theory is not sufficient to guarantee the scattering of solutions, even in the exterior region $\{(x,t): |x| > t+R\}$. In this work we combine energy flux formula and space-time cut-off techniques to prove the exterior scattering of all finite-energy solutions. Now let us give the main result of this work in details.
\begin{theorem} \label{main result}
Assume that the dimension $d$ and exponent $p$ satisfy $3\leq d \leq 5$ and $1+6/d < p < 1+4/(d-2)$. If $u$ is a solution to (CP1) with a finite energy, then there exists a finite-energy free wave $u_L (x,t)$ so that for any $R\in \mathbb{R}$ the following limit holds
\[
 \lim_{t\rightarrow +\infty} \int_{|x|>t+R} |\nabla_{x,t} u(x,t)-\nabla_{x,t} u_L(x,t)|^2  dx = 0.
\]
\end{theorem}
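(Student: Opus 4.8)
The plan is to combine a monotonicity-and-flux analysis of the energy in the exterior region with a finite-speed-of-propagation reduction to a scattering statement for the free evolution; the genuinely new difficulty, compared with the radial case, is the absence of the pointwise decay supplied there by radial Sobolev embedding. First I would set
$E_{\mathrm{ext}}(t;R)=\int_{|x|>t+R}\bigl(\tfrac12|\nabla_x u|^2+\tfrac12 u_t^2+\tfrac1{p+1}|u|^{p+1}\bigr)\,dx$
and apply the divergence theorem to the energy--momentum vector field on the truncated exterior cone $\{(x,t):t_1\le t\le t_2,\ |x|>t+R\}$. Since the lateral flux density equals $\tfrac12(u_t+u_r)^2+\tfrac12|\slashed{\nabla}u|^2+\tfrac1{p+1}|u|^{p+1}\ge 0$, this shows that $t\mapsto E_{\mathrm{ext}}(t;R)$ is nonincreasing, hence converges to some $E_\infty(R)\ge 0$, and that the total flux of this density through the cone $\{|x|=t+R\}$ is finite. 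In particular the tail flux, and with it $\int_{|x|=t+R,\,t\ge T}|u|^{p+1}\,d\sigma$, tends to $0$ as $T\to+\infty$. This integrated bound is what must stand in for the radial pointwise estimate.

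Next I would reduce the theorem to a scattering statement. For $T>0$ let $u_L^{(T)}(t)=S(t-T)(u(\cdot,T),u_t(\cdot,T))$ be the free evolution of the solution's data at time $T$. By Duhamel, $u_L^{(T_2)}-u_L^{(T_1)}=\int_{T_1}^{T_2}S(t-s)(0,-|u|^{p-1}u(s))\,ds$, and finite speed of propagation shows that in the exterior region $\{|x|>t+R\}$ this difference is unaffected by the part of the source supported in $\{|y|\le s+R\}$, because such a source can only influence $\{|x|\le t+R\}$. Consequently the exterior energy of $u_L^{(T_2)}-u_L^{(T_1)}$ is dominated by the free evolution of the exterior-truncated nonlinearity $\mathbf 1_{\{|y|>s+R\}}|u|^{p-1}u$ on $[T_1,T_2]$. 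Thus it suffices to show that a suitable space-time norm of $|u|^{p-1}u$ (a dual Strichartz norm adapted to the energy-subcritical local theory) over the exterior slab $\{|x|>s+R,\ s\ge T\}$ tends to $0$ as $T\to+\infty$, which makes $\{u_L^{(T)}\}$ Cauchy in the exterior energy norm.

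The hard part is precisely this exterior space-time estimate. In the radial case one integrates the nonlinearity along outgoing rays using $|u(r,t)|\lesssim r^{-(d-1)/2}$; no such pointwise bound is available here. Instead I would control the exterior nonlinearity through the cone flux $\int_{|x|=t+\rho}|u|^{p+1}\,d\sigma$ together with the uniform bounds $\|u(t)\|_{\dot H^1}\lesssim\sqrt E$ and $\|u(t)\|_{L^{p+1}}\lesssim E^{1/(p+1)}$, interpolating between the codimension-one flux information and the energy, and inserting smooth space-time cutoffs to pass between the sharp light cone and the open exterior region and to absorb the transition and angular terms. The hypothesis $p>1+6/d$ is exactly the threshold at which the resulting time integral converges and vanishes as $T\to+\infty$; for $p\le 1+6/d$ the flux alone is too weak to close the estimate in the absence of pointwise decay or a weighted-energy assumption. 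Carrying out this interpolation while tracking the cutoff commutators so that they are absorbed by the finite flux is the technical heart of the argument, and I expect it to be the main obstacle.

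Finally I would set $u_L=\lim_{T\to\infty}u_L^{(T)}$ and identify it with the asserted free wave. The estimate for one value of $R$ yields it for all larger $R$; for smaller $R$, two free waves whose exterior energies agree asymptotically on an overlapping region must coincide by the radiation field isometry (Theorem \ref{radiation}), so $u_L$ is independent of $R$ and the limit holds for every $R\in\Rm$. The asymptotically outgoing structure needed to match $u_L$ with the exterior radiation of $u$ is furnished by the inward/outward energy identity recalled in the introduction: it forces the incoming part of the energy of both $u$ and $u_L$ to vanish in the limit, so the cross terms match and $\int_{|x|>t+R}|\nabla_{x,t}(u-u_L)|^2\,dx\to 0$.
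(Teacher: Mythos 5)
Your overall architecture is close to the paper's: exterior energy monotonicity and finiteness of the cone flux, a Duhamel/finite-speed reduction to showing that a dual Strichartz norm of the (truncated) nonlinearity over $[T,+\infty)$ vanishes as $T\to+\infty$, and radiation fields to glue the limiting free waves over different values of $R$. However, there are two genuine gaps. The smaller one is in your reduction: you aim to show that $\||u|^{p-1}u\|$ in a dual Strichartz norm over the \emph{unbounded} exterior slab $\{|x|>s+R,\ s\ge T\}$ tends to $0$. The only space-time control the flux identity provides is $\iint_{s+R_1<|x|<s+R_2}|u|^{p+1}\,dx\,ds\lesssim (R_2-R_1)E$, obtained by integrating the flux bound over the foliation parameter $\rho\in(R_1,R_2)$; this degenerates as $R_2\to+\infty$, and there is no reason for the space-time norm over the full exterior to be finite at all (uniform smallness of the exterior energy in $t$ does not give integrability in $t$). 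The correct organization, as in the paper, is to work in bounded cone shells $\{t+R_1<|x|<t+R_2\}$ and to dispose of the far region $|x|>t+R_2$ separately by the uniform tail estimate (Lemma \ref{global tail estimate} and its free-wave analogue), which makes that region's contribution small uniformly in $t$ without ever estimating the nonlinearity there.

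The larger gap is that the central estimate is only promised, not proved. ``Interpolating between the codimension-one flux information and the energy'' does not by itself yield a vanishing $L^1_tL^2_x$ norm of $|u|^{p-1}u$ on the shell: the flux controls only $\|u\|_{L^{p+1}_{t,x}}$ there, and the H\"older step $\||u|^{p-1}u\|_{L^1L^2}\le\|u\|_{L^{p+1}L^{p+1}}^{k_1}\|u\|_{L^qL^r}^{k_2}$ (Lemma \ref{coefficients arg}) requires, in addition, a global-in-time a priori bound on a genuine Strichartz norm $\|u\|_{L^qL^r}$ of $u$ \emph{restricted to the shell}, which the conservation laws do not give directly. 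The paper obtains it by decomposing $u=v_T+w_T$ on the shell: $w_T$, representing the waves entering through the outer cone $|x|=t+R_2$ after time $T$, has energy and Strichartz norm tending to $0$ as $T\to+\infty$ by the energy flux through that cone and a careful choice of data at $t=T$; and $v_T$ is then bounded by a continuity/bootstrap argument in which the vanishing tail $\|u\|_{L^{p+1}(\Omega(T,+\infty))}^{k_1}$ is the small parameter multiplying the unknown norm. Note also that the hypothesis $p>1+6/d$ enters through the existence of the $1$-admissible pair $(q,r)$ satisfying the exponent identities of Lemma \ref{coefficients arg}, not through convergence of a time integral. Without this decomposition-plus-bootstrap (or an equivalent substitute), your argument does not close at exactly the point you yourself flag as ``the main obstacle.''
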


\begin{remark}
 Exterior scattering is clearly a weaker version of the scattering results in the whole space, and usually a first step to understand the asymptotic behaviour of solutions. For example, Duychaerts, Kenig and Merle prove exterior scattering of bounded solutions to energy critical, focusing wave equation in their work \cite{dkm3}. The first two authors then use this result to discuss soliton resolution of solutions in a subsequent joint work with Jia \cite{djknonradial}. 
\end{remark}

\paragraph{The idea} First of all, if $R$ is sufficiently large, most of the energy stays inside the light cone $|x|=t+R$. (see Lemma \ref{global tail estimate}) Thus it suffices to show the solution $u$ scatters in any cone shell $\Omega = \{(x,t): R_1+t<|x|<R_2+t\}$. Here $R_1<R_2$ are arbitrary constants. We break $u$ into two parts 
\[
 u(x,t) = v_T(x,t) + w_T(x,t), \quad (x,t) \in \Omega_T \doteq \Omega \cap (\Rm^d \times [T,+\infty)).
\]
They satisfy the wave equations $(\partial_t^2  - \Delta) v_T  = -|u|^{p-1}u$ and $(\partial_t^2 -\Delta) w_T = 0$ in $\Omega_T$, respectively. Roughly speaking, the function $w_T$ represents the waves travelling inside the cone $|x|=R_2+t$ after the time $t=T$. One may check that $v_T$ gradually becomes negligible as $T\rightarrow +\infty$ by the energy flux formula. In addition, the energy flux formula also implies that $\|u\|_{L^{p+1}(\Omega_T)}\rightarrow 0$ as $T\rightarrow +\infty$. Combining this fact with suitable Strichartz estimates and applying a continuity argument, we obtain $v_T \in L_t^q L_x^r (\Omega_T)$ for suitable constants $q,r$ and 
\[
  \lim_{T\rightarrow +\infty} \left\|\chi_\Omega |u|^{p-1} u\right\|_{L^1 L^2([T,+\infty)\times \Rm^d)} = 0. 
\]
Here $\chi_\Omega$ is the characteristic function of the region $\Omega$. Strichartz estimates then give the scattering of solutions in $\Omega$. More details can be found in later sections. 

\begin{remark} Let $(d,p)$ be as in the main theorem. If a solution $u$ to the focusing wave equation $\partial_t^2 u - \Delta u = +|u|^{p-1} u$ is defined for all $t\geq 0$ so that
\begin{itemize}
\item the absolute energy flux 
 \[
  g(R) = \int_{|x|=t+R, t\geq 0} \left(|\slashed{\nabla} u(x,t)|^2 + |u_r(x,t)+u_t(x,t)|^2 + |u(x,t)|^{p+1} \right) dS 
 \]
 is a local integrable function of $R \in \Rm$; 
\item The upper limit of absolute energy outside a light cone $|x|=t+R$ defined by
\[
  h(R) = \limsup_{t\rightarrow +\infty} \int_{|x|>t+R} \left(\frac{1}{2}|\nabla u(x,t)|^2 + \frac{1}{2}|u(x,t)|^2 + \frac{1}{p+1}|u(x,t)|^{p+1}\right) dx
\]
is a bounded function of $R\in \Rm$ and satisfies $\displaystyle \lim_{R\rightarrow +\infty} h(R) = 0$;
\end{itemize} 
then we may follow the same argument as in this work to prove the exterior scattering of $u$. 
\end{remark}

\section{Preliminary Results}
\begin{lemma} \label{global tail estimate}
 Let $u$ be a finite-energy solution to (CP1). Then we have the following limit 
 \[
  \lim_{R\rightarrow +\infty} \sup_{t\geq 0} \int_{|x|>t+R} \left(\frac{1}{2}|\nabla u(x,t)|^2 + \frac{1}{2}|u_t(x,t)|^2+\frac{1}{p+1}|u(x,t)|^{p+1}\right) dx = 0.
 \]
\end{lemma}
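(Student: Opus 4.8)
The plan is to show that for each fixed $R$ the exterior energy
\[
 \mathcal{E}(R,t) = \int_{|x|>t+R} \left(\frac{1}{2}|\nabla u(x,t)|^2 + \frac{1}{2}|u_t(x,t)|^2 + \frac{1}{p+1}|u(x,t)|^{p+1}\right) dx
\]
is non-increasing in $t$, so that its supremum over $t\geq 0$ is attained at $t=0$. The lemma then reduces to the statement $\mathcal{E}(R,0)\to 0$ as $R\to +\infty$, which is immediate: $\mathcal{E}(R,0)$ is the tail of the (finite) total energy of the initial data $(u_0,u_1)$, so it tends to zero by dominated convergence. Thus the entire content of the lemma lies in the monotonicity, which I would extract from the local energy conservation law together with the defocusing sign of the nonlinearity.

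For the monotonicity I would use the energy–momentum identity. Writing $F(u) = \frac{1}{p+1}|u|^{p+1}$, the equation gives $u_{tt} = \Delta u - F'(u)$, and a direct computation yields the pointwise conservation law $\partial_t e + \nabla\cdot \mathbf{p} = 0$, where $e = \frac{1}{2}u_t^2 + \frac{1}{2}|\nabla u|^2 + F(u)$ is the energy density and $\mathbf{p} = -u_t\nabla u$ is the momentum density. Applying the space–time divergence theorem to the field $(\mathbf{p},e)$ over the truncated exterior region $\{(x,t): |x|>t+R,\ t_1\leq t\leq t_2\}$ produces the flux identity
\[
 \mathcal{E}(R,t_2) - \mathcal{E}(R,t_1) + \frac{1}{\sqrt{2}}\int_{|x|=t+R,\ t_1\leq t\leq t_2} \left(\frac{1}{2}(u_t+u_r)^2 + \frac{1}{2}|\slashed{\nabla} u|^2 + F(u)\right) dS = 0,
\]
where the lateral integrand is obtained by expanding $-\mathbf{p}\cdot\hat{x}+e$ and using $|\nabla u|^2 = u_r^2 + |\slashed{\nabla} u|^2$ to complete the square in $u_t+u_r$. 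Since every term in the lateral flux is non-negative — this is exactly where the defocusing sign $F(u)\geq 0$ is used — we obtain $\mathcal{E}(R,t_2)\leq \mathcal{E}(R,t_1)$, i.e. the desired monotonicity. Combining this with the initial tail estimate gives $\sup_{t\geq 0}\mathcal{E}(R,t) = \mathcal{E}(R,0)\to 0$ and hence the claim.

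The main obstacle I anticipate is not the algebra but the rigorous justification of the divergence theorem for a merely finite-energy solution, for which $(u,u_t)$ need not be smooth and the integration region is unbounded. I would handle this in the standard way: first establish the flux identity for smooth (or sufficiently regular) solutions, where all boundary integrals are classical, and then pass to the limit using the continuous dependence of the solution on its data in the Strichartz and energy spaces furnished by the local theory cited in the introduction. The unboundedness of the exterior region is harmless, since the finiteness of the energy forces the flux through the sphere at spatial infinity to vanish, so no extra boundary term is generated there. I emphasize that the argument genuinely requires $F(u)\geq 0$; this is the reason the analogous statement for the focusing equation must be imposed as a hypothesis (the bounds on $g(R)$ and $h(R)$ in the remark) rather than proved.
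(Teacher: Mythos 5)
Your proposal is correct and follows essentially the same route as the paper: the paper's proof consists of invoking the energy flux formula to bound the exterior energy at time $t$ by the exterior energy of the data outside $\{|x|>R\}$, which tends to zero. Your write-up simply unpacks that flux formula explicitly (local conservation law, space--time divergence theorem on the truncated exterior of the light cone, non-negativity of the lateral flux from the defocusing sign), together with sensible remarks on the regularization needed for finite-energy solutions.
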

\begin{proof}
 By energy flux formula we have ($t \geq 0$)
\begin{align*}
 \int_{|x|>t+R} & \left(\frac{1}{2}|\nabla u(x,t)|^2 + \frac{1}{2}|u_t(x,t)|^2+\frac{1}{p+1}|u(x,t)|^{p+1}\right) dx \\
 & \leq \int_{|x|>R} \left(\frac{1}{2}|\nabla u_0 (x)|^2 + \frac{1}{2}|u_1(x)|^2+\frac{1}{p+1}|u_0(x)|^{p+1}\right) dx.
\end{align*}
The latter clearly converges to zero as $R\rightarrow +\infty$. 
\end{proof}
\begin{remark} \label{global free tail estimate}
 A similar argument shows that a finite-energy free wave $u_L$ satisfies a similar limit
\[
  \lim_{R\rightarrow +\infty} \sup_{t\geq 0} \int_{|x|>t+R} \left(\frac{1}{2}|\nabla u_L (x,t)|^2 + \frac{1}{2}|\partial_t u_L (x,t)|^2 \right) dx = 0.
 \] 
\end{remark}

\paragraph{Strichartz estimates} The generalized Strichartz estimates plays a key role in the local theory. The following version comes from Ginibre-Velo \cite{strichartz}. 

\begin{proposition}[Strichartz estimates]\label{Strichartz estimates} 
 Let $2\leq q_1,q_2 \leq \infty$, $2\leq r_1,r_2 < \infty$ and $\rho_1,\rho_2,s\in \Rm$ be constants with
 \begin{align*}
  &\frac{2}{q_i} + \frac{d-1}{r_i} \leq \frac{d-1}{2},& &(q_i,r_i)\neq \left(2,\frac{2(d-1)}{d-3}\right),& &i=1,2;& \\
  &\frac{1}{q_1} + \frac{d}{r_1} = \frac{d}{2} + \rho_1 - s;& &\frac{1}{q_2} + \frac{d}{r_2} = \frac{d-2}{2} + \rho_2 +s.&
 \end{align*}
 Assume that $u$ is the solution to the linear wave equation
\[
 \left\{\begin{array}{ll} \partial_t u - \Delta u = F(x,t), & (x,t) \in \Rm^d \times [0,T];\\
 u|_{t=0} = u_0 \in \dot{H}^s; & \\
 \partial_t u|_{t=0} = u_1 \in \dot{H}^{s-1}. &
 \end{array}\right.
\]
Then we have
\begin{align*}
 \left\|\left(u(\cdot,T), \partial_t u(\cdot,T)\right)\right\|_{\dot{H}^s \times \dot{H}^{s-1}} & +\|D_x^{\rho_1} u\|_{L^{q_1} L^{r_1}([0,T]\times \Rm^d)} \\
 & \leq C\left(\left\|(u_0,u_1)\right\|_{\dot{H}^s \times \dot{H}^{s-1}} + \left\|D_x^{-\rho_2} F(x,t) \right\|_{L^{\bar{q}_2} L^{\bar{r}_2} ([0,T]\times \Rm^d)}\right).
\end{align*}
Here the coefficients $\bar{q}_2$ and $\bar{r}_2$ satisfy $1/q_2 + 1/\bar{q}_2 = 1$, $1/r_2 + 1/\bar{r}_2 = 1$. The constant $C$ does not depend on $T$ or $u$. 
\end{proposition}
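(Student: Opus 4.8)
The plan is to reduce every piece of the estimate to the two half-wave propagators $e^{\pm i t\sqrt{-\Delta}}$ and to prove a single frequency-localized dispersive bound, from which both the energy/homogeneous Strichartz term and the inhomogeneous term follow by the abstract $TT^*$ machinery of Keel--Tao together with Littlewood--Paley theory. Writing the solution by Duhamel's formula
\[
 u(t) = \cos(t\sqrt{-\Delta})\, u_0 + \frac{\sin(t\sqrt{-\Delta})}{\sqrt{-\Delta}}\, u_1 + \int_0^t \frac{\sin((t-\tau)\sqrt{-\Delta})}{\sqrt{-\Delta}}\, F(\tau)\, d\tau,
\]
and expressing $\cos$ and $\sin$ through $e^{\pm i t\sqrt{-\Delta}}$, it suffices to control each half-wave evolution and the associated retarded integral. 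The energy component $\|(u(\cdot,T),\partial_t u(\cdot,T))\|_{\dot{H}^s \times \dot{H}^{s-1}}$ is the easiest piece: the homogeneous contribution is conserved exactly, and the contribution of $F$ is bounded by the dual of the homogeneous space-time estimate, i.e.\ $\|\int_0^T U(T-\tau)F\,d\tau\|_{\dot{H}^s\times\dot{H}^{s-1}} \lesssim \|D_x^{-\rho_2}F\|_{L^{\bar q_2}L^{\bar r_2}}$.

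The analytic core is the fixed-time dispersive bound for a single Littlewood--Paley block. Localizing the spatial frequency to $|\xi|\sim 1$ by a projection $P_0$ and applying stationary phase to the oscillatory kernel of $e^{i t\sqrt{-\Delta}}P_0$, one obtains $\|e^{i t\sqrt{-\Delta}} P_0 f\|_{L^\infty_x} \lesssim |t|^{-(d-1)/2}\|P_0 f\|_{L^1_x}$ for $t\neq 0$. Interpolating this against the exact $L^2$ conservation $\|e^{i t\sqrt{-\Delta}} P_0 f\|_{L^2_x}=\|P_0 f\|_{L^2_x}$ gives, for $2\le r<\infty$,
\[
 \|e^{i t\sqrt{-\Delta}} P_0 f\|_{L^r_x} \lesssim |t|^{-\sigma(r)} \|P_0 f\|_{L^{r'}_x}, \qquad \sigma(r)=(d-1)\Big(\tfrac12-\tfrac1r\Big).
\]
This is exactly the hypothesis of the Keel--Tao theorem with decay exponent $\sigma=(d-1)/2$, so its $TT^*$ argument yields, for every sharp-admissible pair $\frac2q+\frac{d-1}{r}=\frac{d-1}{2}$ other than the forbidden endpoint $(q,r)=(2,\tfrac{2(d-1)}{d-3})$, the block estimate $\|e^{i t\sqrt{-\Delta}}P_0 f\|_{L^q_t L^r_x}\lesssim \|P_0 f\|_{L^2_x}$ together with its retarded inhomogeneous counterpart.

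From these single-frequency estimates I would recover the full statement by Littlewood--Paley decomposition and summation. Rescaling $P_0$ to the $j$-th dyadic block $P_j$ produces the block bound $\|P_j e^{it\sqrt{-\Delta}}f\|_{L^{q}_t L^{r}_x}\lesssim 2^{j\gamma}\|P_j f\|_{L^2}$ with $\gamma$ dictated by scaling; applying the square-function equivalence on $L^{q_1}_t L^{r_1}_x$ and Minkowski's inequality (legitimate since $q_1,r_1\ge2$), then inserting $D_x^{\rho_1}$, reduces $\|D_x^{\rho_1}u\|_{L^{q_1}L^{r_1}}$ to $(\sum_j 2^{2j(\rho_1+\gamma)}\|P_j f\|_{L^2}^2)^{1/2}$. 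The scaling identity $\tfrac1{q_1}+\tfrac{d}{r_1}=\tfrac d2+\rho_1-s$ forces $\rho_1+\gamma=s$, so this sum is precisely $\|f\|_{\dot{H}^s}$; the second identity $\tfrac1{q_2}+\tfrac{d}{r_2}=\tfrac{d-2}{2}+\rho_2+s$ plays the analogous role for the dual forcing norm. The sub-admissible range allowed by the inequality $\tfrac2{q_i}+\tfrac{d-1}{r_i}\le\tfrac{d-1}{2}$ is then reached from the sharp line by Bernstein/Sobolev embedding at the level of each block. Finally, for the inhomogeneous term with possibly distinct pairs $(q_1,r_1)\neq(q_2,r_2)$, the Keel--Tao bilinear form controls the \emph{non-retarded} integral over $\tau\in\Rm$, and the Christ--Kiselev lemma upgrades this to the retarded integral $\int_0^t$; this step is valid precisely because excluding the endpoint for both pairs guarantees the strict inequality $\bar q_2<q_1$.

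I expect the main obstacle to lie in the frequency summation and the behaviour near the endpoint, rather than in the fixed-frequency dispersive analysis, which is classical stationary phase. The Keel--Tao estimate is intrinsically a single-block bound against $L^2$, so passing to the genuine Sobolev norms $\dot{H}^s\times\dot{H}^{s-1}$ rests on the square-function reconstruction together with the exact matching of derivative orders forced by the two scaling identities; this bookkeeping, and the fact that the pair $(2,\tfrac{2(d-1)}{d-3})$ is exactly the boundary at which the $TT^*$ bilinear argument and the Christ--Kiselev reduction cease to apply (coinciding with the genuinely forbidden Keel--Tao endpoint when $d=3$, where $r=\infty$), is the delicate point underlying the cited Ginibre--Velo result.
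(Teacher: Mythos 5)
The paper itself offers no proof of this proposition: it is quoted verbatim from Ginibre--Velo \cite{strichartz}, so your sketch can only be compared against that standard literature argument. Your route --- Duhamel plus reduction to the half-wave groups $e^{\pm it\sqrt{-\Delta}}$, the stationary-phase dispersive bound $\|e^{it\sqrt{-\Delta}}P_0 f\|_{L^\infty}\lesssim |t|^{-(d-1)/2}\|P_0f\|_{L^1}$ for a unit-frequency block, interpolation with $L^2$ conservation, the Keel--Tao $TT^*$ theorem with $\sigma=(d-1)/2$, and Littlewood--Paley summation with the two scaling identities forcing $\rho_1+\gamma=s$ --- is the standard modern proof and is essentially correct in all of those steps; it differs from Ginibre--Velo's original argument, which predates both Keel--Tao and the Christ--Kiselev lemma and instead runs a direct $TT^*$/duality-and-interpolation scheme in Besov spaces, treating the retarded integral by hand. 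One simplification you missed: Keel--Tao admissibility is the \emph{inequality} $\tfrac1q\le\tfrac{d-1}2\bigl(\tfrac12-\tfrac1r\bigr)$, so the sub-sharp range $\tfrac2q+\tfrac{d-1}r<\tfrac{d-1}2$ comes out of the $TT^*$ theorem directly at each block, with the factor $2^{j\gamma}$, $\gamma=\tfrac d2-\tfrac dr-\tfrac1q$, supplied by scaling; the Bernstein descent from the sharp line is not needed.

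There is, however, one genuine gap, in the last step. You claim that excluding the pair $\bigl(2,\tfrac{2(d-1)}{d-3}\bigr)$ for both indices guarantees $\bar q_2<q_1$, which is what Christ--Kiselev needs. This is false: $\bar q_2<q_1$ is equivalent to $\tfrac1{q_1}+\tfrac1{q_2}<1$, which fails exactly when $q_1=q_2=2$, and for $d\ge4$ the hypotheses of the proposition do admit $q_1=q_2=2$ with any $r_i>\tfrac{2(d-1)}{d-3}$ (for instance $d=5$, $q_1=q_2=2$, $r_1=r_2=5$ satisfies $\tfrac2{q_i}+\tfrac{4}{r_i}=\tfrac95\le 2$ and avoids the forbidden pair $(2,4)$). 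In that corner Christ--Kiselev gives nothing, so your proof as written does not cover all the stated exponents. The repair is available but uses a different ingredient: for $d\ge4$ one has $\sigma=\tfrac{d-1}2>1$, and Keel--Tao's theorem includes the \emph{retarded} estimate at the sharp $q=2$ endpoint in that regime; applying it blockwise at the sharp pairs, descending to $r_i>\tfrac{2(d-1)}{d-3}$ by Bernstein, and summing the blocks closes the case $q_1=q_2=2$. For $d=3$ the corner is vacuous, since $r_i<\infty$ together with admissibility forces $q_i>2$, so there your argument is complete. Relatedly, your closing remark that the excluded pair is ``exactly the boundary at which ... the Christ--Kiselev reduction ceases to apply'' is not accurate: the failure of $\bar q_2<q_1$ is governed by the pair of $q$'s alone, not by the excluded endpoint.
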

\begin{remark}
 If $(q_1,r_1)$ and $s$ satisfy the conditions given in Proposition \ref{Strichartz estimates} with $\rho_1=0$, we call $(q_1,r_1)$ an $s$-admissible pair. 
\end{remark}

\paragraph{Radiation fields} The following theorem describes the asymptotic behaviour of free waves, the details and proof of which can be found in Duyckaerts et al. \cite{dkm3} and Friedlander \cite{radiation1, radiation2}. 
\begin{theorem}[Radiation fileds] \label{radiation}
Assume that $d\geq 3$ and let $u$ be a solution to the homogeneous linear wave equation $\partial_t^2 u - \Delta u = 0$ with initial data $(u_0,u_1) \in \dot{H}^1 \times L^2(\Rm^d)$. Then 
\[
 \lim_{t\rightarrow +\infty} \int_{\Rm^d} \left(|\slashed{\nabla} u(x,t)|^2 + \frac{|u(x,t)|^2}{|x|^2}\right) dx = 0
\]
 and there exists a function $G_+ \in L^2(\Rm \times \mathbb{S}^{d-1})$ so that
\begin{align*}
 \lim_{t\rightarrow +\infty} \int_0^\infty \int_{\mathbb{S}^{d-1}} \left|r^{\frac{d-1}{2}} \partial_t u(r\theta, t) - G_+(r-t, \theta)\right|^2 d\theta dr &= 0;\\
 \lim_{t\rightarrow +\infty} \int_0^\infty \int_{\mathbb{S}^{d-1}} \left|r^{\frac{d-1}{2}} \partial_r u(r\theta, t) + G_+(r-t, \theta)\right|^2 d\theta dr & = 0.
\end{align*}
In addition, the map $(u_0,u_1) \rightarrow \sqrt{2} G_+$ is a bijective isometry from $\dot{H}^2 \times L^2(\Rm^d)$ to $L^2 (\Rm \times \mathbb{S}^{d-1})$.
\end{theorem}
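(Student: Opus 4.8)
The plan is to establish all three assertions first on a dense class of data and then extend by the isometry. I would take the dense class to be $(u_0,u_1) \in C_c^\infty(\Rm^d) \times C_c^\infty(\Rm^d)$ and represent the solution through the half-wave decomposition $u = \frac{1}{2}e^{it\sqrt{-\Delta}} f_+ + \frac{1}{2}e^{-it\sqrt{-\Delta}} f_-$, with $f_\pm = u_0 \mp i(-\Delta)^{-1/2} u_1$. Equivalently one may use the plane-wave (Radon) representation, which expresses $u$ through the Radon transforms $\mathcal{R}u_0$ and $\mathcal{R}u_1$ and gives the cleanest formula for the candidate radiation profile: up to an explicit dimensional constant, $G_+(s,\theta)$ is a half-order derivative of $\mathcal{R}u_0,\mathcal{R}u_1$ at $(s,\theta)$ (with an additional Hilbert transform when $d$ is even). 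This is the function against which I would test the convergence.

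For the base case I would compute the asymptotics of $r^{(d-1)/2}\partial_t u(r\theta,t)$ and $r^{(d-1)/2}\partial_r u(r\theta,t)$ along the outgoing rays $r = t+s$ by stationary phase applied to the oscillatory integrals defining $e^{\pm it\sqrt{-\Delta}}f_\pm$. The stationary points lie on $|x| = t$, and collecting the leading contributions of the two half-waves (from the antipodal frequency directions $\pm\theta$) assembles the profile $G_+$, producing exactly $r^{(d-1)/2}\partial_t u \to G_+(r-t,\theta)$ and $r^{(d-1)/2}\partial_r u \to -G_+(r-t,\theta)$; the companion expansion for $u$ itself reads $u(r\theta,t) \approx r^{-(d-1)/2} F(r-t,\theta)$ with $\partial_s F = -G_+$. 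Because the data are compactly supported, sharp Huygens (odd $d$) or an explicit kernel estimate (even $d$) bounds every remaining term by $O(r^{-(d+1)/2})$ in the forward region, uniformly enough to upgrade the pointwise asymptotics to the two $L^2(dr\,d\theta)$ limits of the statement.

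The vanishing assertions and the isometry then follow on the dense class. The $L^2(dr\,d\theta)$ limits give $\int_{\Rm^d}|u_t|^2\,dx \to \|G_+\|_{L^2}^2$ and $\int_{\Rm^d}|u_r|^2\,dx \to \|G_+\|_{L^2}^2$, while the lower-order terms in the expansion yield $\int_{\Rm^d}|\slashed{\nabla} u|^2\,dx \to 0$ and, via $\int_{\Rm^d}|u|^2/|x|^2\,dx \approx \int (t+s)^{-2}|F(s,\theta)|^2\,ds\,d\theta \to 0$ (dominated convergence, using that $F$ has compact $s$-support here), the second half of the first assertion. Feeding $\int|\slashed{\nabla} u|^2 \to 0$ into the conserved energy identity $\int_{\Rm^d}(|u_r|^2 + |\slashed{\nabla} u|^2 + |u_t|^2)\,dx \equiv \|(u_0,u_1)\|_{\dot{H}^1 \times L^2}^2$ gives $2\|G_+\|_{L^2}^2 = \|(u_0,u_1)\|_{\dot{H}^1\times L^2}^2$, i.e. the isometry $(u_0,u_1)\mapsto \sqrt{2}\,G_+$.

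It remains to remove the regularity assumption and prove surjectivity. Since $T:(u_0,u_1)\mapsto \sqrt{2}\,G_+$ is an isometry on the dense class, it extends uniquely to an isometry of $\dot{H}^1\times L^2(\Rm^d)$ into $L^2(\Rm\times\mathbb{S}^{d-1})$. That the two defining $L^2$ limits persist for general data I would verify by a three-epsilon argument: approximating the data in energy norm, the difference of the two solutions solves the free equation and has conserved energy equal to the (small) difference of the data, controlling $u_t$ and $u_r$ uniformly in $t$, while the isometry controls the difference of the profiles; the base-case limit for the approximant then transfers to the limit. Surjectivity follows because the range of $T$ is closed (an isometry on a complete space) and contains a dense subspace, exhibited by solving the explicit (inverse Radon) formula for smooth compactly supported profiles. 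The main obstacle is the base-case stationary-phase analysis: isolating the leading profile together with an error bound strong enough to give $L^2$ rather than merely pointwise convergence and the exact energy identity. Once that is secured, the density extension, the vanishing limits, and surjectivity are routine.
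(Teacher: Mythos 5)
The paper does not actually prove Theorem \ref{radiation}; it is quoted as known, with the proof deferred to Friedlander \cite{radiation1, radiation2} and Duyckaerts--Kenig--Merle \cite{dkm3}. Measured against those sources, your outline is essentially the standard argument they give: establish the asymptotics on a dense class via an explicit representation (Friedlander's plane-wave/Radon picture, i.e.\ the Lax--Phillips translation representation), get the constant $\sqrt{2}$ from energy conservation plus equipartition once $\|\slashed{\nabla} u\|_{L^2}$ and $\||x|^{-1}u\|_{L^2}$ are shown to vanish, and extend by a three-epsilon density argument, with surjectivity from the closed range of an isometry plus a dense set of attained profiles. Your density step is airtight as stated: energy conservation gives uniform-in-$t$ control of $\partial_t(u-u^k)$ and $\partial_r(u-u^k)$, and Hardy's inequality ($d\geq 3$) does the same for $|x|^{-1}(u-u^k)$, so the base-case limits do transfer to arbitrary finite-energy data.

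Two caveats, neither fatal but both needing attention if the outline were executed. First, if you run stationary phase through the half-wave operators $e^{\pm it\sqrt{-\Delta}}f_\pm$ with $f_\pm = u_0 \mp i(-\Delta)^{-1/2}u_1$, then $C_c^\infty \times C_c^\infty$ data are an awkward dense class: the symbol $|\xi|$ is singular at $\xi = 0$ and $(-\Delta)^{-1/2}u_1$ is neither compactly supported nor low-frequency regular, so the customary choice is data with $\widehat{f_\pm} \in C_c^\infty(\Rm^d \setminus \{0\})$ --- or else one works entirely in the Radon representation, where compact support and (odd $d$) strong Huygens make the $O(r^{-(d+1)/2})$ error estimates clean; you should commit to one of these rather than mixing them. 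Second, surjectivity is not quite ``routine'' from inverse Radon: the range of the Radon transform of a single function is constrained (the evenness relation $\mathcal{R}f(s,\theta) = \mathcal{R}f(-s,-\theta)$ together with moment conditions), and density of the attained profiles $G_+$ in all of $L^2(\Rm \times \mathbb{S}^{d-1})$ holds only because the profile mixes contributions of both $u_0$ and $u_1$, whose even and odd parts jointly fill the range; the cleanest verification is on the Fourier side, where $\widehat{G_+}(\sigma,\theta)$ is an explicit invertible combination of $\widehat{u_0}(\sigma\theta)$ and $\widehat{u_1}(\sigma\theta)$ and Plancherel in polar coordinates gives bijectivity at once. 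Finally, note a typo you silently corrected: the paper's statement says the isometry maps from $\dot{H}^2 \times L^2$, but the correct space (and the one your energy identity produces, consistent with \cite{dkm3}) is $\dot{H}^1 \times L^2$.
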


\begin{lemma} \label{coefficients arg}
 Let $(d,p)$ be coefficients as in the main theorem. Then there exist a $1$-admissible pair $(q,r)$ with $q<+\infty$ and constants $k_1,k_2>0$ so that 
\begin{align*}
 &\frac{k_1}{p+1} + \frac{k_2}{q} = 1;& &\frac{k_1}{p+1} + \frac{k_2}{r} = \frac{1}{2};& &k_1+k_2 = p.&
\end{align*}
\end{lemma}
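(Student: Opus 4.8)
The plan is to treat this as an explicit piece of linear algebra in the reciprocals $1/q$, $1/r$ and the weights $k_1,k_2$. First I would use the third equation to write $k_1 = p - k_2$ and substitute it into the first two. A short computation then expresses everything through the single free parameter $k_2$:
\[
\frac{1}{q} = \frac{1+k_2}{k_2(p+1)}, \qquad \frac{1}{r} = \frac{2k_2-(p-1)}{2k_2(p+1)}.
\]
Thus the three equations of the statement cut out a one-parameter family of candidate solutions, and the remaining freedom is exactly one scalar.

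Next I would impose $1$-admissibility. By the remark following the Strichartz estimates, a $1$-admissible pair must satisfy the scaling identity $\frac{1}{q} + \frac{d}{r} = \frac{d-2}{2}$. Substituting the two displayed expressions produces a single linear equation for $k_2$, whose solution is
\[
k_2 = \frac{dp - d - 2}{\,d+4 - p(d-2)\,}, \qquad k_1 = p - k_2.
\]
This fixes the pair $(q,r)$ and the weights $(k_1,k_2)$ completely, so the only remaining task is to verify that this unique candidate is genuinely admissible and has positive weights.

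The verification is where all the hypotheses get used, and each requirement collapses to a factorable inequality in $p$. Positivity of $k_2$ and of the common denominator $d+4-p(d-2)$ holds throughout the window $1+2/d < p < 1+6/(d-2)$, which contains the subcritical range; positivity of $k_1$ is equivalent, after factoring, to $p < p_e = 1+4/(d-2)$, i.e.\ precisely the subcriticality assumption. The condition $q \geq 2$ (that is, $1/q \leq 1/2$) reduces to $(p+1)(dp-d-6) \geq 0$, which is exactly $p \geq 1+6/d$; the condition $r \geq 2$ reduces to $p \geq d/(d-2) = 1+2/(d-2)$; and the Strichartz admissibility inequality $\frac{2}{q} + \frac{d-1}{r} \leq \frac{d-1}{2}$, after using the scaling identity, reduces to $(1+k_2)\bigl(p(d-1)-(d+3)\bigr) \geq 0$, i.e.\ $p \geq (d+3)/(d-1)$. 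Since $d\geq 3$ gives both $1+6/d \geq (d+3)/(d-1)$ and $1+6/d \geq 1+2/(d-2)$, all three lower-bound constraints are subsumed by the single hypothesis $p > 1+6/d$, which moreover forces $q>2$ strictly (hence $(q,r)$ avoids the forbidden endpoint), while $1/q>0$ gives $q<\infty$ automatically.

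The computations are routine; the only genuine content is recognizing that the \emph{binding} admissibility constraint is $q \geq 2$ and that it is equivalent to precisely the lower bound $p > 1+6/d$, all other conditions then being implied. The main thing to be careful about is the sign bookkeeping: the denominator $d+4-p(d-2)$ vanishes at $p = 1+6/(d-2)$, which lies above $p_e$, so one must keep the entire argument inside the subcritical window where this denominator stays positive, otherwise the positivity statements for $k_1,k_2$ would reverse.
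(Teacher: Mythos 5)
Your proposal is correct and lands on exactly the same explicit choice as the paper: the formulas $1/q = 2/(dp-d-2)$, $1/r = (dp-2p-d)/(2(dp-d-2))$, $k_2=(dp-d-2)/(d+4+2p-dp)$, $k_1=p-k_2$ that you derive are precisely the constants the paper simply writes down and asserts to work, so your argument is the same proof with the derivation and the admissibility checks (which the paper leaves to the reader) carried out. The only cosmetic slip is that your condition $p\geq d/(d-2)$ is what guarantees $r<\infty$ (i.e.\ $1/r>0$) rather than $r\geq 2$ (which only needs $p\geq 1$), but the set of constraints you verify is the right one and the conclusion stands.
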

\begin{proof}
 It is clear that the constants 
\begin{align*}
 &\frac{1}{q} = \frac{2}{dp-d-2};& &\frac{1}{r} = \frac{dp-2p-d}{2(pd-d-2)};& \\
 &k_1 = \frac{(p+1)(d+2+2p-dp)}{(d+2+2p-dp)+2};& &k_2 = \frac{pd-d-2}{(d+2+2p-dp)+2}&
\end{align*}
satisfy the conclusion of Lemma \ref{coefficients arg}. Our assumption on $d$ and $p$ guarantees that $(q,r)$ given above is indeed a $1$-admissible pair.
\end{proof}
\begin{remark}
Form geometric point of view, we are looking for an admissible pair $(q,r)$, so that the points $(1/(p+1), 1/(p+1))$, $(1/p,1/2p)$ and $(1/q,1/r)$ are on the same straight line, as shown in figure \ref{figure adpair}. H\"{o}lder inequality implies 
\[
 \||u|^{p-1}u\|_{L^1 L^2} \leq \|u\|_{L^{p+1} L^{p+1}}^{k_1} \|u\|_{L^q L^r}^{k_2}.
\]
\end{remark}

 \begin{figure}[h]
 \centering
 \includegraphics[scale=0.6]{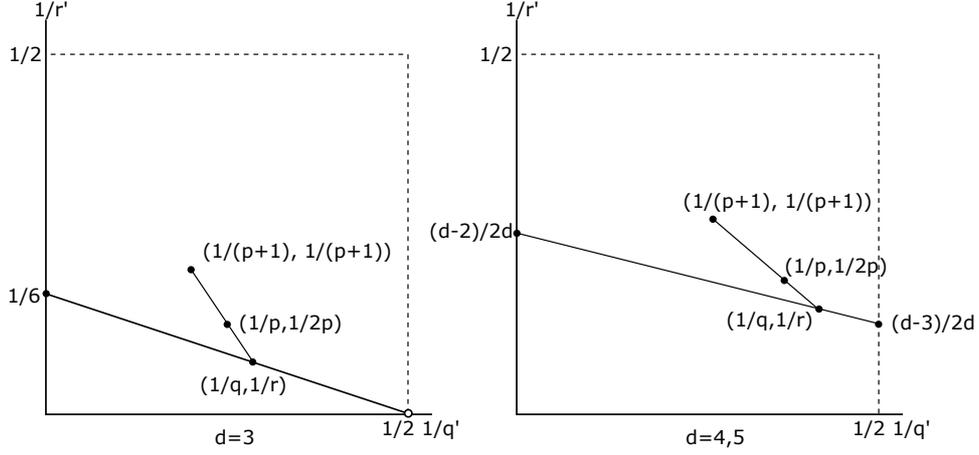}
 \caption{Illustration of line segment} \label{figure adpair}
\end{figure}

\section{Decomposition of solutions}
Let $R_1<R_2$ be fixed constants. We first fix a smooth cut-off function $\varphi: \Rm \rightarrow [0,1]$ satisfying
\[
 \varphi(r) = \left\{\begin{array}{ll} 1, & \hbox{if}\; r\geq 1;\\
 0, & \hbox{if}\; r\leq 1/2. \end{array}\right.
\]
Given a time $T\gg 1$, we define 
\[
 (w_{0,T} (x), w_{1,T}(x)) = \left\{\begin{array}{ll} (u(x,T),u_t(x,T)), & \hbox{if}\; |x|\geq T+R_2; \\
 \left(\varphi\left(\tfrac{x}{T+R_2}\right)u(x, |x|-R_2),0\right), & \hbox{if}\; |x|<T+R_2. \end{array} \right.
\]
Energy flux formula implies
\begin{equation} \label{energy flux bound}
 \int_{|x|=t+R_2} \left(\frac{1}{2}|\slashed{\nabla} u|^2 + \frac{1}{2}|(\partial_r +\partial_t)u|^2 + \frac{1}{p+1} |u|^{p+1}\right) dS \lesssim E,
\end{equation} 
thus we have
\[
 \|u(x,|x|-R_2)\|_{\dot{H}^1(\Rm^d)} \lesssim_1 E^{1/2}.
\]
A straightforward calculation shows 
\begin{equation} \label{boundedness of initial data w}
  \|(w_{0,T}, w_{1,T})\|_{\dot{H}^1 \times L^2(\Rm^d)} \lesssim_1 E^{1/2}, \quad \forall T \gg 1, 
\end{equation} 
and 
\begin{equation} \label{small initial data}
 \lim_{T\rightarrow +\infty} \int_{|x|<T+R_2} \left(|\nabla w_{0,T}(x)|^2 + |w_{1,T}(x)|^2\right) dx = 0.
\end{equation}
Next we use notations $\chi_1(x,t), \chi_2(x,t)$ to represent the characteristic functions of the regions $\{(x,t): |x|\geq t+R_2\}$ and $\{(x,t): t+R_1<|x|<t+R_2\}$, respectively. Given $T\gg 1$, let $w=w_T$ be the solution to the following linear wave equation
\[
 \left\{\begin{array}{ll} \partial_t^2 w - \Delta w = -\chi_1 (x,t) |u|^{p-1} u, & (x,t)\in \Rm^d \times [T,+\infty); \\
 w(x,T) = w_{0,T} (x); & x \in \Rm^d; \\
 w_t(x,T) = w_{1,T}(x); & x\in \Rm^d. \end{array}\right.
\] 
Please note that the initial data satisfy $(w_{0,T}(x),w_{1,T}(x)) = (u(x,T),u_t(x,T))$ if $|x|\geq T+R_2$. Thus by finite speed of propagation we have 
\begin{equation} \label{id w}
 (w(x,t), w_t(x,t)) = (u(x,t),u_t(x,t)), \quad |x|\geq t+R_2, \; t\geq T. 
\end{equation}
Similarly we define $v=v_T$ to be the solution to
\begin{equation} \label{equation v}
 \left\{\begin{array}{ll} \partial_t^2 v - \Delta v = -\chi_2 (x,t) |u|^{p-1} u, & (x,t)\in \Rm^d \times [T,+\infty); \\
 v(x,T) = v_{0,T}(x) \doteq u(x,T)-w_{0,T} (x); & x \in \Rm^d; \\
 v_t(x,T) = v_{1,T}(x) \doteq u_t(x,T)-w_{1,T}(x); & x\in \Rm^d. \end{array}\right.
\end{equation}
A combination of \eqref{boundedness of initial data w} and the energy conservation law implies that the initial data of $v$ is uniformly bounded
\begin{equation} \label{uniform upper bound initial v}
 \left\|(v_{0,T}, v_{1,T})\right\|_{\dot{H}^1\times L^2(\Rm^d)} \leq A, \quad \forall T \gg 1.
\end{equation}
Here $A\lesssim_1 E^{1/2}$ is a constant independent of $T$. In addition, $v+w$ solves the wave equation
\[
 (\partial_t^2 - \Delta) (v+w) = - [\chi_1(x,t) + \chi_2(x,t)] |u|^{p-1} u
\]
with the same data as $u$ at time $T$. We may utilize finite speed of propagation speed again and obtain 
\begin{equation} \label{v plus w equal u}
 (v(x,t)+w(x,t), v_t(x,t) +w_t(x,t)) = (u(x,t),u_t(x,t)), \quad |x|\geq t+R_1, \; t\geq T. 
\end{equation}
Please note that both solutions $v$ and $w$ depend on $T$. For simplicity we will omit the subscript $T$ in the argument below. 

\section{Estimation of Waves Going Inside}
Since the solution $w$ solves 
\[
 \partial_t^2 w - \Delta w = 0, \quad |x|<t+R_2;
\]
we may apply energy flux formula of the homogeneous linear wave equation and obtain ($t_1>T$)
\begin{align*}
 \int_{|x|<t_1+R_2} \left(|\nabla w(x,t_1)|^2 + |w_t(x,t_1)|^2\right) dx = & \int_{|x|<T+R_2} \left(|\nabla w(x,T)|^2 + |w_t(x,T)|^2\right) dx \\
 & + \frac{1}{\sqrt{2}} \int_{\Sigma(T,t_1)} \left(|\slashed{\nabla} w|^2 + |(\partial_r +\partial_t)w|^2 \right) dS\\
 = & \int_{|x|<T+R_2} \left(|\nabla w_{0,T}(x)|^2 + |w_{1,T}(x)|^2\right) dx\\
 & + \frac{1}{\sqrt{2}} \int_{\Sigma(T,t_1)} \left(|\slashed{\nabla} u|^2 + |(\partial_r +\partial_t)u|^2 \right) dS.
\end{align*}
Here $\Sigma(T,t_1) = \{(x,t): |x|=t +R_2, T<t<t_1\}$ is a part of the forward light cone $|x|=t+R_2$. We may substitute $w$ with $u$ in the surface integral above because of identity \eqref{id w}. We combine the identity above with the universal upper bound \eqref{energy flux bound} and the convergence of initial data \eqref{small initial data} to obtain
\begin{equation} \label{tail is small}
 \lim_{T\rightarrow +\infty} \sup_{t\geq T} \int_{|x|<t+R_2} \left(|\nabla w(x,t)|^2 + |w_t(x,t)|^2\right) dx = 0. 
\end{equation}
Next we define ($t_1>T>0$) 
\[
 (w_{0,T,t_1} (x), w_{1,T,t_1}(x)) = \left\{\begin{array}{ll} (w(x,t_1),w_t(x,t_1)), & \hbox{if}\; |x|\leq t_1+R_2; \\
 \left(u(x, |x|-R_2),0\right) & \hbox{if}\; |x|>t_1+R_2; \end{array} \right.
\]
Please note that the definitions in two regions coincide at the boundary $|x|=t_1+R_2$ according to \eqref{id w}. Combining \eqref{tail is small} and the fact that $u(x, |x|-R_2) \in \dot{H}^1(\Rm^d)$, we have 
\[
 \lim_{T\rightarrow +\infty} \sup_{t_1\geq T} \left\|(w_{0,T,t_1}, w_{1,T,t_1})\right\|_{\dot{H}^1 \times L^2(\Rm^d)} = 0. 
\]
By finite speed of propagation and Strichartz estimates, we obtain the following inequality for any $1$-admissible pair $(q,r)$:
\begin{align*}
 \|\chi_2 w\|_{L^q L^r ([T,t_1]\times \Rm^d)} = & \left\|\chi_2 \mathbf{S}_L(t-t_1) (w_{0,T,t_1}, w_{1,T,t_1})\right\|_{L^q L^r ([T,t_1]\times \Rm^d)}\\
 \leq & \left\|\mathbf{S}_L(t-t_1) (w_{0,T,t_1}, w_{1,T,t_1})\right\|_{L^q L^r ([T,t_1]\times \Rm^d)}\\
 \lesssim & \left\|(w_{0,T,t_1}, w_{1,T,t_1})\right\|_{\dot{H}^1 \times L^2(\Rm^d)}.
\end{align*}
Therefore we have 
\begin{equation} \label{tail is small Strichartz}
 \lim_{T\rightarrow +\infty} \|\chi_2 w\|_{L^q L^r ([T,+\infty)\times \Rm^d)} = 0.
\end{equation}

\section{Scattering in Cone Shells}
Let $(q,r)$ and $k_1,k_2$ be constants as in Lemma \ref{coefficients arg}. We recall equation \eqref{equation v}, apply Strichartz estimates and obtain ($t_1>T\gg 1$)
\[
 \|v\|_{L^q L^r ([T,t_1]\times \Rm^d)} \lesssim  \|(v_{0,T},v_{1,T})\|_{\dot{H}^1 \times L^2} + \|\chi_2 |u|^{p-1}u\|_{L^1 L^2([T,t_1]\times \Rm^d)} < +\infty. 
\]
We may also utilize the uniform upper bound \eqref{uniform upper bound initial v} and the H\"{o}lder inequality
\[
 \|\chi_2 |u|^{p-1} u\|_{L^1 L^2([T,t_1]\times \Rm^d)} \leq \|\chi_2 u\|_{L^{p+1} L^{p+1}([T,t_1]\times \Rm^d)}^{k_1} \|\chi_2 u\|_{L^q L^r([T,t_1]\times \Rm^d)}^{k_2},
\]
in the inequality above and obtain
\begin{align*}
 \|v\|_{L^q L^r ([T,t_1]\times \Rm^d)} \lesssim & A + \|\chi_2 u\|_{L^{p+1} L^{p+1}([T,t_1]\times \Rm^d)}^{k_1} \|\chi_2 u\|_{L^q L^r([T,t_1]\times \Rm^d)}^{k_2} \\
 \lesssim & A + \|u\|_{L^{p+1}(\Omega(T, t_1))}^{k_1} \left(\|\chi_2 v\|_{L^q L^r([T,t_1]\times \Rm^d)}+\|\chi_2 w\|_{L^q L^r([T,t_1]\times \Rm^d)}\right)^{k_2}.
\end{align*}
Here $\Omega(T, t_1) = \{(x,t): t+R_1<|x|<t+R_2, T<t<t_1\}$. In this region we have $u=v+w$ by \eqref{v plus w equal u}. Next we substitute the symbol $\lesssim$ with an explicit constant in the inequality above for the convenience of further discussion: 
\[
 \|v\|_{L^q L^r ([T,t_1]\times \Rm^d)} \leq C_1 A + C_1 \|u\|_{L^{p+1}(\Omega(T, t_1))}^{k_1} \left(\|\chi_2 v\|_{L^q L^r([T,t_1]\times \Rm^d)}+\|\chi_2 w\|_{L^q L^r([T,t_1]\times \Rm^d)}\right)^{k_2}. 
\]
The constant $C_1$ is solely determined by $d, p$ thus independent to $T, t_1$. By the energy flux formula, we also have
\[
 \iint_{t+R_1<|x|<t+R_2} |u(x,t)|^{p+1} dx dt \lesssim_1 (R_2-R_1) E.
\]
A combination of this fact with \eqref{tail is small Strichartz} implies that given any $\eps>0$, the following inequality always holds as long as $t_1>T$ and $T\geq T_1(\eps)$ is sufficiently large:
\begin{align*}
 \|v\|_{L^q L^r ([T,t_1]\times \Rm^d)} & \leq C_1 A + C_1 \eps \left(\|\chi_2 v\|_{L^q L^r([T,t_1]\times \Rm^d)} + \eps \right)^{k_2}\\
 & \leq C_1 A + C_1 \eps \left(\|v\|_{L^q L^r([T,t_1]\times \Rm^d)} + \eps \right)^{k_2}.
\end{align*}
We may choose a constant $\eps>0$ so that 
\[
 2C_1 A > C_1A + C_1 \eps \left(2C_1A + \eps \right)^{k_2}. 
\]
A continuity argument then shows that if $T\geq T_1(\eps)$, then 
\begin{equation} 
 \|v\|_{L^q L^r([T,t_1]\times \Rm^d)} < 2C_1 A, \; \forall t_1>T\quad \Rightarrow \quad \|v\|_{L^q L^r([T,+\infty)\times \Rm^d)} \leq 2C_1A.
\end{equation}
Therefore we have 
\begin{align*}
& \|\chi_2 |u|^{p-1}u\|_{L^1 L^2([T_1, +\infty)\times \Rm^d)}\\
  \leq & \|\chi_2 u\|_{L^{p+1} L^{p+1}([T_1,+\infty)\times \Rm^d)}^{k_1} \|\chi_2 u\|_{L^q L^r([T_1,+\infty)\times \Rm^d)}^{k_2}\\
  \leq &\|u\|_{L^{p+1}(\Omega(T_1, +\infty))}^{k_1} \left(\|\chi_2 v\|_{L^q L^r([T_1,+\infty)\times \Rm^d)}+\|\chi_2 w\|_{L^q L^r([T_1,+\infty)\times \Rm^d)}\right)^{k_2} \\
  < & +\infty. 
\end{align*}
Next we let $v_L(x,t) = \mathbf{S}_L(t-T) (v_{0,T}, v_{1,T})$ be free waves. By Strichartz estimates
\[
 \limsup_{t\rightarrow +\infty} \left\|(v(\cdot,t), v_t(\cdot,t))-(v_L(\cdot,t), \partial_t v_L(\cdot,t))\right\|_{\dot{H}^1 \times L^2} \lesssim_1 \|\chi_2 |u|^{p-1}u\|_{L^1 L^2([T,+\infty)\times \Rm^d)}
\]
Thus we have
\begin{equation}
 \lim_{T\rightarrow +\infty} \limsup_{t\rightarrow +\infty} \left\|(v(\cdot,t), v_t(\cdot,t))-(v_L(\cdot,t), \partial_t v_L(\cdot,t))\right\|_{\dot{H}^1 \times L^2} = 0.
\end{equation}
Combining this with \eqref{v plus w equal u} and \eqref{tail is small}, we obtain 
\begin{equation}
 \lim_{T\rightarrow +\infty} \limsup_{t\rightarrow +\infty} \int_{t+R_1<|x|<t+R_2} \left(|\nabla u(x,t) - \nabla v_L(x,t)|^2 + |u_t(x,t)-\partial_t v_L(x,t)|^2\right) dx = 0. 
\end{equation}
By radiation fields, there exist a family of functions $G_{T}(R,\theta) \in L^2([R_1,R_2]\times \mathbb{S}^{d-1})$ so that 
\begin{align*}
 \lim_{t'\rightarrow +\infty} \int_{R_1}^{R_2} \int_{\mathbb{S}^{d-1}} \left|(R+t')^{\frac{d-1}{2}} \partial_t v_L ((R+t')\theta, t') - G_T (R, \theta)\right|^2 d\theta dR &= 0;\\
 \lim_{t'\rightarrow +\infty} \int_{R_1}^{R_2} \int_{\mathbb{S}^{d-1}} \left|(R+t')^{\frac{d-1}{2}} \partial_r v_L ((R+t')\theta, t') + G_T (R, \theta)\right|^2 d\theta dR & = 0;\\
 \lim_{t\rightarrow +\infty} \int_{t+R_1<|x|<t+R_2} |\slashed{\nabla} v_L(x,t)|^2 dx & = 0.
\end{align*}
Therefore we have 
\begin{align}
 \lim_{T\rightarrow +\infty} \limsup_{t'\rightarrow +\infty} \int_{R_1}^{R_2} \int_{\mathbb{S}^{d-1}} \left|(R+t')^{\frac{d-1}{2}} \partial_t u ((R+t')\theta, t') - G_T (R, \theta)\right|^2 d\theta dR &= 0; \label{limit uGT1} \\
 \lim_{T\rightarrow +\infty} \limsup_{t'\rightarrow +\infty} \int_{R_1}^{R_2} \int_{\mathbb{S}^{d-1}} \left|(R+t')^{\frac{d-1}{2}} \partial_r u ((R+t')\theta, t') + G_T (R, \theta)\right|^2 d\theta dR & = 0; \label{limit uGT2}\\
 \lim_{t\rightarrow +\infty} \int_{t+R_1<|x|<t+R_2} |\slashed{\nabla} u(x,t)|^2 dx & = 0. \nonumber
\end{align}
The first limit implies that $(R+t')^{\frac{d-1}{2}} \partial_t u ((R+t')\theta, t')$ is a Cauchy sequence in the space $L^2([R_1,R_2]\times \mathbb{S}^{d-1})$ as $t'\rightarrow +\infty$. Thus there exists a function $G \in L^2([R_1,R_2]\times \mathbb{S}^{d-1})$, so that 
\[
 \lim_{t'\rightarrow +\infty} \int_{R_1}^{R_2} \int_{\mathbb{S}^{d-1}} \left|(R+t')^{\frac{d-1}{2}} \partial_t u ((R+t')\theta, t') - G (R, \theta)\right|^2 d\theta dR = 0.
\]
Combining \eqref{limit uGT1} and \eqref{limit uGT2}, we also have 
\[
 \limsup_{t'\rightarrow +\infty} \int_{R_1}^{R_2} \int_{\mathbb{S}^{d-1}} \left|(R+t')^{\frac{d-1}{2}} \partial_t u ((R+t')\theta, t') + (R+t')^{\frac{d-1}{2}} \partial_r u ((R+t')\theta, t')\right|^2 d\theta dR = 0.
\]  
Thus 
\[
 \lim_{t'\rightarrow +\infty} \int_{R_1}^{R_2} \int_{\mathbb{S}^{d-1}} \left|(R+t')^{\frac{d-1}{2}} \partial_r u ((R+t')\theta, t') + G (R, \theta)\right|^2 d\theta dR = 0.
\]
Finally we have
\begin{align*}
 \|G\|_{L^2 ([R_1, R_2]\times \mathbb{S}^{d-1})}^2 & = \lim_{t'\rightarrow +\infty} \int_{R_1}^{R_2} \int_{\mathbb{S}^{d-1}} \left|(R+t')^{\frac{d-1}{2}} \partial_t u ((R+t')\theta, t')\right|^2 d\theta dR\\
 & = \lim_{t'\rightarrow +\infty} \int_{t'+R_1<|x|<t'+R_2} |u_t(x,t')|^2 dx \leq 2E.
\end{align*}
\section{Exterior Scattering}
The argument in the previous sections works for any $-\infty < R_1 < R_2 < +\infty$, thus there exists a function $G \in L_{loc}^2 (\Rm \times \mathbb{S}^{d-1})$ so that the following limits hold for any $R_1<R_2$
\begin{align*}
 \lim_{t\rightarrow +\infty} \int_{t+R_1}^{t+R_2} \int_{\mathbb{S}^{d-1}} \left|r^{\frac{d-1}{2}} \partial_t u (r\theta, t) - G (r-t, \theta)\right|^2 d\theta dr &= 0;\\
  \lim_{t\rightarrow +\infty} \int_{t+R_1}^{t+R_2} \int_{\mathbb{S}^{d-1}} \left|r^{\frac{d-1}{2}} \partial_r u (r\theta, t) + G (r-t, \theta)\right|^2 d\theta dr &= 0;\\
  \lim_{t\rightarrow +\infty} \int_{t+R_1<|x|<t+R_2} |\slashed{\nabla} u(x,t)|^2 dx & = 0.
\end{align*}
 By the universal upper bound of $\|G\|_{L^2 ([R_1, R_2]\times \mathbb{S}^{d-1})}$ given at the end of last section, we actually have $G \in L^2 (\Rm \times \mathbb{S}^{d-1})$. 
By radiation fields we may find a free wave $u_L$ with a finite energy, so that
\begin{align*}
 \lim_{t\rightarrow +\infty} \int_{0}^{+\infty} \int_{\mathbb{S}^{d-1}} \left|r^{\frac{d-1}{2}} \partial_t u_L (r\theta, t) - G (r-t, \theta)\right|^2 d\theta dr &= 0;\\
  \lim_{t\rightarrow +\infty} \int_{0}^{+\infty} \int_{\mathbb{S}^{d-1}} \left|r^{\frac{d-1}{2}} \partial_r u_L (r\theta, t) + G (r-t, \theta)\right|^2 d\theta dr &= 0;\\
  \lim_{t\rightarrow +\infty} \int_{\Rm^d} |\slashed{\nabla} u_L (x,t)|^2 dx & = 0.
\end{align*}
Combining the limits above we have
\begin{align*}
 \lim_{t\rightarrow +\infty} & \int_{t+R_1<|x|<t+R_2} \left(|\nabla u(x,t)-\nabla u_L(x,t)|^2 + |\partial_t u(x,t)-\partial_t u_L(x,t)|^2 \right) dx \\
  = & \lim_{t\rightarrow +\infty}  \int_{t+R_1}^{t+R_2} \int_{\mathbb{S}^{d-1}} \left(\left|\partial_r u (r\theta, t) - \partial_r u_L(r\theta, t)\right|^2 + \left|\partial_t u (r\theta, t) - \partial_t u_L(r\theta, t)\right|^2\right) r^{d-1} d\theta dr\\
  & + \lim_{t\rightarrow +\infty} \int_{t+R_1<|x|<t+R_2} |\slashed{\nabla} u(x,t) -\slashed{\nabla} u_L (x,t)|^2 dx = 0.
\end{align*}
By Lemma \ref{global tail estimate} and Remark \ref{global free tail estimate} we also have 
\begin{align*}
 \lim_{R\rightarrow +\infty} \sup_{t\geq 0} \int_{|x|>t+R} \left(|\nabla u(x,t)-\nabla u_L(x,t)|^2 + |\partial_t u(x,t)-\partial_t u_L(x,t)|^2 \right) dx = 0. 
\end{align*}
Thus we have the exterior scattering
\[
 \lim_{t\rightarrow +\infty} \int_{|x|>t+R} \left(|\nabla u(x,t)-\nabla u_L(x,t)|^2 + |\partial_t u(x,t)-\partial_t u_L(x,t)|^2 \right) dx = 0.
\]

\end{document}